\newcommand{\floor}[1]{\left\lfloor #1 \right\rfloor}
\newcommand{\ceil}[1]{\left \lceil #1 \right \rceil}
\newcommand{\fractional}[1]{\left \{ #1 \right \}}
\newtheorem{theorem}{Theorem}[section]
\newtheorem{lemma}[theorem]{Lemma}
\title{Expression for $g(k)$ Related to Waring's Problem}
\author{Owen Root}
\begin{document}

\maketitle

\begin{abstract}
    Waring's Problem asks whether, for each positive integer $k$, there exists an integer $s$ such that every positive integer is a sum of at most $k$th powers. While Hilbert proved the existence of such $s$, Waring's Problem has lead to areas of related work, namely the function $g(k)$, which denotes the least such $s$. There is no known general closed form for $g(k)$, though for $g(k)$ has been evaluated for small $k$. Prior work has reduced the problem to verifying a particular condition, which if never occurs, implies an expression for $g(k)$. In this paper, I present a proof the condition never occurs, thus fixing the value of $g(k)$.
\end{abstract}

\section{Introduction}

Edward Waring asked in 1770 whether, for each positive integer $k$, there exists an integer $s$ such that every positive integer is a sum of at most $k$th powers \cite{Waring}. Hilbert answered this affirmatively in 1909 \cite{Hilbert}. Let 
$g(k)$ denote the least such s. Despite intense study since that time, there is no known general closed form for $g(k)$; only conditional formulas are known \cite{Dickson1936, KubinaWunderlich1990, Mahler1957, Niven1944, Pillai1936, Rubugunday1942}.
\par
A particularly compelling candidate is the explicit expression
\begin{equation}\label{eq:conj}
g(k)=2^k-\left\lfloor\left( \frac{3}{2}\right)^k\right\rfloor-2.
\end{equation}
which matches all known values of $g(k)$ for small $k$. Dickson, Pillai, Rubugunday, and Niven showed that the only possible failure of \eqref{eq:conj} is if there exits a $k$ such that
\begin{equation} \label{eq:carry}
2^k \left\{\left(\frac{3}{2}\right)^k\right\} +\left\lfloor \left( \frac{3}{2} \right)^k\right\rfloor > 2^k,
\end{equation}
where $\floor{x}$ and $\fractional{x}=x-\floor{x}$ denote the floor and fractional-part functions, respectively. If \eqref{eq:carry} never occurs, then \eqref{eq:conj} follows. The present paper proves that \eqref{eq:carry} does not occur for any positive integer $k$. Consequently, $g(k)$ is given exactly by \eqref{eq:conj}.

\section{Proof}

\subsection{Groundwork} \label{subsect: ground work}

To set up, let $f_1(k)=2^k$, the right side of \eqref{eq:carry}, and let $f_2(k)$ equal the left side of \eqref{eq:carry}, both of which can be seen in Fig. \ref{fig: steps} (note that in the figures, for graphical clarity, $k$ is treated as $\floor{x}$). $f_1$ and $f_2$ are "strict" in the sense that they are functions of only integer inputs. Consider the "strict" expression's "loose" forms (in the sense that they are functions of not just the integers), $F_1(x)=2^x$ and 
\begin{equation} \label{eq: looser one}
    F_2(x)=2^x \fractional{\left(\frac{3}{2}\right)^x} +\left\lfloor \left( \frac{3}{2} \right)^x\right\rfloor,
\end{equation}
where $x\in\mathbb{R}$ and $x>0$. As $\fractional{x}=x-\lfloor x\rfloor$, so we can further right \eqref{eq: looser one} as
\begin{equation} \label{eq: looser two}
    F_2(x)=2^x \left( \left( \frac{3}{2}\right)^x -\left\lfloor \left( \frac{3}{2} \right)^x \right\rfloor \right) +\left\lfloor \left( \frac{3}{2} \right)^x\right\rfloor.
\end{equation}
\par
As can be seen in Fig. \ref{fig: loose and strict forms}, $F_2(x)$ forms a "comb" of segments as the floor terms breaks the function into discontinuous pieces . Let $S_n$ denote the $n$th segment of the comb, where $n=1,2,3,...$ so the first segment is $S_1$. Each segment $S_n$ has $x$ values that form its upper and lower endpoints, denoted $L_n$ and $U_n$, respectively. $S_n\in[L_n,U_n)$, that is, all of the $L_n$ fall on \eqref{eq: looser two}'s lower envelope, 
\begin{equation} \label{eq: lower envelope}
    \phi_1(x)=(3/2)^x,
\end{equation}
while, due to the nature of the floor function, all of the $U_n$ asymptotically "fall" on it's upper envelope, 
\begin{equation} \label{eq: upper envelope}
    \phi_2(x)=2^x+(3/2)^x-1.
\end{equation} 
This behavior can be seen in Fig. \ref{fig: envelopes of S_n}. Each $S_n$ segment shares its lower bound with the upper bound of the previous segment, $L_n=U_{n-1}$, and shares its upper bound with the lower bound of the next segment, $U_n=L_{n+1}.$
\par
The location of the "steps" formed by $f_1(k)=2^k$ and $f_2(k)$ are located at wherever the "looser" functions, $F_1(x)=2^x$ and $F_2(x)$, cross integer values of $x$ (a.k.a the values of $k$). Hence we can recast the initial condition given, the inequality in \eqref{eq:carry}, to a simpler case: \emph{Are there any integer values of $x$ where  \eqref{eq: looser two}, $F_2(x)$, is greater than $F_1(x)=2^x$?} 
\par
In fact, we need not even consider the entirety of $F_2(x)$. Since \eqref{eq:carry} is a strict inequality, we only need to consider the portion that is greater than $F_1(x)=2^x$. This portion consists of the segments that form the top part of the "comb." Denote the $n$-th segment of the top part of the "comb," the portion $> 2^x$, as $s_n$. Each $s_n$ has $x$ values for its lower and upper endpoints, denoted $l_n$ and $u_n$, respectively. As with the larger segments, $s_n\in[l_n, u_n)$. The lower envelope of the $s_n$ segments is evidently $2^x$, while the upper envelope is the same as for the $S_n$ segments, $\phi_2$, so $U_n=u_n$. These envelopes can be seen in Fig. \ref{fig: envelopes of s_n}.
\par
As the $s_n$ segments are, by definition, greater than $2^x$, we can rephrase the question even more narrowly: \emph{Are there any $s_n$ segments that fall on an integer value of $x$?} That is, a segment that has a some midpoint at an integer $x$, hence an $l_n$ below the integer $x$ and an $u_n$ above the integer $x$.

\subsection{Values of Upper and Lower Bounds}
The limit of each $u_n$, or each $L_{n+1}$, is given by the solutions to 
\begin{equation}
    2^x \left( \left( \frac{3}{2}\right)^x -\left\lfloor \left( \frac{3}{2} \right)^x \right\rfloor \right) +\left\lfloor \left( \frac{3}{2} \right)^x\right\rfloor = \left(\frac{3}{2}\right)^x.
\end{equation}
Let $a=(3/2)^x$, then $2^x\{a\}+\floor{a}=a$. $\floor{a}$ is always an integer, $\floor{a}=n, \; n\in\mathbb{Z}$, and $\{a\}=a-n$. Hence (5) is recast as $2^x(a-n)=a-n$, or $2^x\{a\}=\{a\}$. This has solutions $\{a\}=0$ (so $a$ must be an integer) or $2^x=1$. The solution $x=0$ accounts for both, while all other solutions for $x$ are constrained by the requirement that $a$ be an integer. We now have $a=n\rightarrow \left( \frac{3}{2} \right)^x=n$. 
\par
Taking logarithms of $\left( \frac{3}{2} \right)^x=n$, we have that the limit of $u_n$, as approached from the left, is given by
\begin{equation} \label{eq: limit of u_n}
    \frac{\ln(n+1)}{\ln(3/2)}, \;\;n=1,2,3,...,
\end{equation}
where I have re-indexed $n$ so as to avoid $u_1=0$. The first several $u_n$ can be seen in Fig. \ref{fig:5}.
\par
Now let's turn our attention to $l_n$, which are given by the solutions to 

\begin{equation} \label{eq: l_n solution to}
     2^x \left( \left( \frac{3}{2}\right)^x -\left\lfloor \left( \frac{3}{2} \right)^x \right\rfloor \right) +\left\lfloor \left( \frac{3}{2} \right)^x\right\rfloor = 2^x.
\end{equation}

We proceed similarly to $u_n$; with $a=(3/2)^x$ and $\floor{a}=n$, we have $2^x(a-n)+n=2^x$. With some rearranging we have $2^x(n+1-(3/2)^x)=n$. Replacing the exponents with their base-$e$ equivalents, we get $e^{x\ln2}(n+1-e^{x\ln(3/2)})=n$. Now let $t=e^{x\ln(3/2)}=a$, so $x=\ln(t)/\ln(3/2)$. We can also relate $t$ with $e^{x\ln2}$, $t^q=e^{x\ln2}$, where $q=\ln(2)/\ln(3/2)$. \eqref{eq: l_n solution to} has now been recast to

\begin{equation}\label{eq: recast l_n solution to}
    t^q(n+1-t)=n \; \Rightarrow \; t(n)=\sqrt[q]{\frac{n}{n+1-t}},
\end{equation}
which is transcendental, with trivial solutions $t=1$. Consider the non-integer extension of \eqref{eq: recast l_n solution to}, $t^q(x+1-t)=x$, which is shown in Fig. \ref{fig:6} and can be denoted $t(x)$. The non-trivial solutions may be found numerically. 
\par
Since in \eqref{eq: recast l_n solution to} each integer $n$ gives a single non-trivial solution for $t(x=n)$, we can simply denote it $t_n$. To show this, consider the non-integer extension can be rearranged into
\begin{equation} \label{eq: less abig recast l_n solution to}
    x(t)=\frac{t^q-t^{q+1}}{1-t^q},
\end{equation}
which only contains the non-trivial solutions, as $t=1$ leaves it undefined. Note that we need only consider the regime $x>0$ and $t>0$. Consider now $\text{d}x/\text{d}t=x'(t)$,
\begin{equation} \label{eq: x'(t)}
    x'(t)=\frac{t^{q - 1} (t^{q + 1} - (q + 1) t + q)}{(t^{q} - 1)^{2}}.
\end{equation}
\par
Since $q=\ln(2)/\ln(3/2)>0$, for $t>0$ we have $t^{q-1}>0$ and $(t^q-1)^2>0.$ So to determine the sign of $x'(t)$ we then need to consider $t^{q+1}-(q+1)t+q$. Because $q>0$, we know $t^{q+1}$ is convex on $t>0$. The addition of the linear terms $-(q+1)t+q$ preserve the convexity. In fact $t^{q+1}-(q+1)t+q=0$ has the solution $t=1$, which, together with the convexity, ensures that $t^{q+1}-(q+1)t+q\ge0$ for $t>0$, with equality at $t=1$. Thus on the domain $t\in(0,1)\cup(1,\infty)$, we have $x'(t)>0$.
\par
In the limiting behavior, evidently $\lim_{t\rightarrow 0}x(t)=0$ and $\lim_{t\rightarrow \infty}x(t)=\infty$. At the discontinuity, $\lim_{t\rightarrow 1}x(t)=1/q$, by L'Hôpital's rule. Therefore, on the domain, $x\in(0,1/q)\cup(1/q,\infty), \;t\in(0,1)\cup(1,\infty)$, we have injectivity between $t$ and $x$. Since $1/q<1$, It follows that for each integer $n\ge1$ there exits a unique $t_n>1$ with $x(t_n)=n$. 
\par
By definition of $l_n$ and $u_n$, we know $u_{n-1}<l_{n}<u_{n}$, otherwise we would have segment with a lower endpoint that is not between the segment's upper end point and the upper end point of the previous segment, which is non-nonsensical. Canceling the $\ln(3/2)$ factors in each and exponentiating, this becomes $n<t_n<n+1$. Therefore, we can say that the solutions to \eqref{eq: l_n solution to} are given by 
\begin{equation} \label{eq: l_n values}
    l_n=\frac{\ln(t_n)}{\ln(3/2)},
\end{equation}
where $t_n$ is given by the solutions for $t$ in \eqref{eq: recast l_n solution to} for $n<t_n<n+1$, $n=1,2,3,...$. The first several $l_n$ are shown in Fig. \ref{fig:7}.

\subsection{Relationship Between the Bounds} \label{subsect: relat. bt. bounds}
With $u_n$ and $l_n$ established, we return to our earlier posed question: \emph{Are there any $u_n$ and $l_n$ that fall on opposite sides of an integer value of $x$?} That is, is there an $n>1$ such that that $\ceil{l_n}=\floor{u_n}$? Or equivalently, is there an $n$ where $\floor{l_n}=\floor{u_n}$ and $\ceil{l_n}=\ceil{u_n}$ are not true?  $\ceil{x}$ denotes the ceiling function (the smallest integer greater than or equal to $x$). 
\par
As established, $n<t_n<n+1$, which implies that $\ceil{t_n}=\ceil{n+1}=\floor{n+1}$, as $n+1$ is an integer. Consider then $\ln(n)<\ln(t_n)<\ln(n+1)$, but since $\ln$ maps integers to transcendentals, $\ceil{\ln(n+1)}\ne \floor{\ln(n+1)}$, and in fact $\ceil{\ln(n+1)}>\floor{\ln(n+1)}$. It remains the case though that $\ceil{\ln(t_n)}=\ceil{\ln(n+1)}$, given that $\ln(n)<\ln(n+a)<\ln(n+1)$, where $a\in \mathbb{R}, 0<a<1$ and $\ln(n+1)-\ln(n)<1$ for $n>1$.
\par
Consider now, 
\begin{equation} \label{eq: general ceil prop}
    \ceil{\frac{\ln(\alpha)}{b}}=\ceil{\frac{\ln(n+1)}{b}},
\end{equation}
where $b$ is some non-integer $>0$, and $\alpha$ is some non-integer such that $n<a<n+1$. Under what properties of $b$ does \eqref{eq: general ceil prop} hold? Since $n<\alpha<n+1$, $\ln(\alpha)<\ln(n+1)$, and so $\ln(\alpha)/b<\ln(n+1)/b$. 
\par
If $\ln(\alpha)/b>\floor{\ln(n+1)/b}$, then 
\begin{equation} \label{eq: step 1}
    \floor{\frac{\ln(n+1)}{b}}<\frac{\ln(\alpha)}{b}<\frac{\ln(n+1)}{b}.
\end{equation}
As $\ln(n+1)/b$ is not an integer,
\begin{equation} \label{eq: step 2}
    \floor{\frac{\ln(n+1)}{b}}<\frac{\ln(n+1)}{b}<\floor{\frac{\ln(n+1)}{b}}+1.
\end{equation}
Putting \eqref{eq: step 1} and \eqref{eq: step 2} together, we have
\begin{equation}
    \floor{\frac{\ln(n+1)}{b}}<\frac{\ln(\alpha)}{b}<\floor{\frac{\ln(n+1)}{b}}+1.
\end{equation}
Hence, $\ceil{\ln(\alpha)/b}=\floor{\ln(n+1)/b}+1.$ From \eqref{eq: step 2}, $\ceil{\ln(n+1)/b}=\floor{\ln(n+1)/b}+1$, and so 
\begin{equation}
    \ceil{\frac{\ln(\alpha)}{b}}=\ceil{\frac{\ln(n+1)}{b}}\ne\floor{\frac{\ln(n+1)}{b}}.
\end{equation}
Therefore, the conditions $\ln(\alpha)/b>\floor{\ln(n+1)/b}$ and $b$ not an integer, are sufficient conditions on $b$ for \eqref{eq: general ceil prop} to hold.
\par
Consider the case that $\alpha=t_n$ and $b:=\ln(3/2)$, so $\ln(\alpha)/b=l_n$. Here, $b$ is evidently not an integer, so the question is the condition that $l_n>\floor{\ln(n+1)/b}$ met? If so, then $\ceil{l_n}=\ceil{u_n}$. For convenience, define 
\begin{equation}
    z_n:=\floor{\frac{\ln(n+1)}{b}}.
\end{equation}
Notice that $l_n$ is strictly increasing with respect to $n$ and never an integer, while $z_n$ is integer-valued, non-decreasing, and changes only at those indices where $\ln(n+1)/b$ crosses an integer. Call any index $\ell$ with $z_{\ell+1}>z_\ell$ a \emph{jump index}.
\par
Since \(l_n\) is increasing and $z_n$ is constant on each block between jumps, the following monotonicity transfer holds: if for some jump index $\ell$ we have $l_\ell>z_\ell$, then $l_n>z_n$ for every $n$ in the entire block where $ z_n=z_\ell$. Equivalently, once the inequality $l_n>\lfloor \ln(n+1)/b\rfloor$ is true at the start of a block, it remains true throughout that block. This enables us to narrow the search to verify that $l_n>\floor{\ln(n+1)/b}$ to just the jump indexes $\ell$.
\par
We can specifically enumerate the $\ell$ integers. Let $i$ index the $\ell$ values (forgive the repeated indexing). The $l_i$ are exactly the integers whose unit interval $[n,n+1)$ contains a power of $3/2$, equivalently, the $\ell_i$ values are given by 
\begin{equation} \label{eq: jump indexes}
    \ell_i=\floor{\left(\frac{3}{2}\right)^i}, 
\end{equation}
the first few of which are $l_i=1,2,3,5,7,11,17,\dots$, though since we are only concerned with $n\ge 2$, we also only need to be concerned with $i\ge2$.

\subsection{Some Useful Lemmas}
Now consider the following lemmas, which will become useful shortly. 
\begin{lemma} \label{lem: 1}
    For $n$ integer and $n>1$, with $b=\ln(3/2)$,
        \begin{equation} \label{eq: lemma 1}
        \floor{\frac{\ln(\floor{\left(\frac{3}{2}\right)^n}+1)}{b}}=n.
    \end{equation}
\end{lemma}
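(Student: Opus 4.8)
The plan is to show directly that the quantity $\ln(\lfloor(3/2)^n\rfloor+1)/b$ lies strictly between $n$ and $n+1$; since $b=\ln(3/2)>0$, that alone pins the outer floor to the value $n$. To that end, write $m:=\lfloor(3/2)^n\rfloor$, so that by the definition of the floor function $m\le(3/2)^n<m+1$. The proof then splits into a lower estimate and an upper estimate for $\ln(m+1)/b$, each obtained from one half of this sandwich.

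For the lower bound I would use the inequality $m+1>(3/2)^n$: taking logarithms and dividing by $b=\ln(3/2)$ gives $\ln(m+1)/b>n$. This uses nothing more than monotonicity of $\ln$, and in particular does not invoke the hypothesis $n>1$.

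For the upper bound I would use $m\le(3/2)^n$, which yields the crude bound $m+1\le(3/2)^n+1$. It therefore suffices to show $(3/2)^n+1<(3/2)^{n+1}$. Since $(3/2)^{n+1}-(3/2)^n=\tfrac12(3/2)^n$, this inequality is equivalent to $(3/2)^n>2$. This is exactly the point at which the hypothesis $n>1$ is needed: for $n\ge 2$ we have $(3/2)^n\ge(3/2)^2=9/4>2$, using that $x\mapsto(3/2)^x$ is increasing. Consequently $m+1<(3/2)^{n+1}$, and taking logarithms and dividing by $b$ gives $\ln(m+1)/b<n+1$.

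Combining the two estimates yields $n<\ln(m+1)/b<n+1$, whence $\lfloor\ln(\lfloor(3/2)^n\rfloor+1)/b\rfloor=n$, which is the assertion of Lemma~\ref{lem: 1}. I do not expect any genuine obstacle here; the only thing to be careful about is recognizing that the crude estimate $m+1\le(3/2)^n+1$ is tight enough precisely when $(3/2)^n>2$, i.e.\ precisely on the range $n\ge 2$ — which is consistent with the restriction to $i\ge 2$ noted after \eqref{eq: jump indexes}.
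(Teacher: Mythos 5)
Your proposal is correct and follows essentially the same route as the paper: sandwich $\ln\!\left(\floor{(3/2)^n}+1\right)/b$ strictly between $n$ and $n+1$, using $\floor{(3/2)^n}+1>(3/2)^n$ for the lower bound and $(3/2)^n+1<(3/2)^{n+1}$ (i.e.\ $(3/2)^n>2$, where $n>1$ enters) for the upper bound. The only cosmetic difference is that you get the strict inequalities directly from the floor definition and the strict step $(3/2)^n+1<(3/2)^{n+1}$, whereas the paper also invokes the non-integrality of $(3/2)^n$; both are fine.
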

\begin{proof}
Showing that \eqref{eq: lemma 1} is equivalent to showing that $n<\log_{3/2}(\floor{(3/2)^n}+1)<n+1$, which then operation of the floor function makes it equal to $n$.
\par
First consider the lower bound. For $n\ge1$, $(3/2)^n$ is not an integer, hence $\lfloor (3/2)^n\rfloor+1>(3/2)^n$. The base $3/2>1$ makes $\log_{3/2}$ strictly increasing, so
\begin{equation}
    \log_{3/2}\left(\floor{\left(\frac{3}{2}\right)^n}\right)>\log_{3/2}\left(\left(\frac{3}{2}\right)^n\right)=n.
\end{equation}
\par
Now consider the upper bound. If $n>1$ then $(3/2)^n\ge (3/2)^2=2.25$. Since $(3/2)^n$ is not an integer, $\floor{(3/2)^n}+1<(3/2)^n+1$. Because $(3/2)^n> 2$, we have $(3/2)^n+1< (3/2)^n+(3/2)^n/2=(3/2)^{n+1}$. Hence $\floor{(3/2)^n}+1<(3/2)^{n+1}$. The monotonicity of $\log_{3/2}$ gives
\begin{equation}
   \log_{3/2}\left(\floor{\left(\frac{3}{2}\right)^n}+1\right)<\log_{3/2}\left(\left(\frac{3}{2}\right)^{n+1}\right)=n+1. 
\end{equation}
\par
Combining the two bounds we have
$n<\log_{3/2}(\floor{(3/2)^n}+1)<n+1$, thus via operation by the floor function we arrive at \eqref{eq: lemma 1}.
\end{proof}

\begin{lemma}\label{lem: 2}
    Let $t_{f(x)}$ be given by the solutions to $t_{f(x)}^q(f(x)+1-t_{f(x)})=f(x)$, where $q>1$, $x>0$, $f(x)$ is some injective function of $x$ such that $f(x)\ge1$. Then 
    \begin{equation} \label{eq: lemma 2}
        f(x)<t_{f(x)}<f(x)+1.
    \end{equation}
\end{lemma}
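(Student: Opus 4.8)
\emph{Proof strategy.} Write $m := f(x)$, so $m \ge 1$ and $q > 1$ throughout. The equation $t^q(m+1-t)=m$ always has the trivial root $t=1$ (there $1\cdot m = m$), and for $t\neq 1$ it is equivalent, after the rearrangement that produced \eqref{eq: less abig recast l_n solution to}, to $m = h(t)$ where $h(t):=t^q(t-1)/(t^q-1)$. This $h$ is exactly the function $x(t)$ of \eqref{eq: less abig recast l_n solution to} with $m$ playing the role of $x$, so the derivative computation \eqref{eq: x'(t)} and the accompanying limits carry over for every $q>0$: $h$ is strictly increasing on each of $(0,1)$ and $(1,\infty)$, with $h(0^+)=0$, $\lim_{t\to1}h(t)=1/q$ (L'Hôpital), and $h(t)\to\infty$ as $t\to\infty$; since $q>1$ we also have $1/q<1$.

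First I would identify which root is meant. Because $h$ maps $(0,1)$ bijectively onto $(0,1/q)$ and $(1,\infty)$ bijectively onto $(1/q,\infty)$, and $m\ge 1>1/q$, the equation $h(t)=m$ has no solution in $(0,1)$ and exactly one solution $t_m\in(1,\infty)$; together with $t=1$ this is the whole solution set, so $t_m$ is \emph{the} non-trivial value $t_{f(x)}$. (To avoid invoking the global monotonicity here, it is enough to check $h(t)<1$ on $(0,1)$, i.e.\ $t^{q+1}-2t^q+1>0$ there; the latter polynomial equals $1$ at $t=0$, vanishes at $t=1$, and is decreasing on $(0,1)$, hence positive there.)

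The two bounds are then brief calculations. \emph{Lower bound:} for every $t>1$ one has $h(t)<t$, since clearing the positive denominator gives $t^q(t-1)<t(t^q-1)\iff t^q>t\iff t^{q-1}>1$, true as $t>1$ and $q>1$; taking $t=t_m$ yields $m=h(t_m)<t_m$. \emph{Upper bound:} $h(m+1)=(m+1)^q m/((m+1)^q-1)$, and $h(m+1)>m\iff(m+1)^q>(m+1)^q-1$, which holds trivially; since $h$ is strictly increasing on $(1,\infty)$ and $m+1>1$, the relation $h(t_m)=m<h(m+1)$ forces $t_m<m+1$. (A monotonicity-free version of the upper bound: $p(t):=t^{q+1}-(m+1)t^q+m$ satisfies $p(m+1)=m>0$ and $p'(t)=t^{q-1}\bigl((q+1)t-(m+1)q\bigr)>0$ for $t\ge m+1$, so $p>0$ and hence $h>m$ on $[m+1,\infty)$, again giving $t_m<m+1$.) Combining the two bounds, $f(x)=m<t_m<m+1=f(x)+1$, which is \eqref{eq: lemma 2}.

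The steps needing the most care are: (i) confirming the monotonicity and limit facts behind \eqref{eq: x'(t)} really do hold for a general exponent — they do, since that argument used only the convexity of $t^{q+1}$ and the fact that $t=1$ solves $t^{q+1}-(q+1)t+q=0$, both valid for all $q>0$, while $q>1$ is needed only to get $1/q<1$; and (ii) the boundary case $m=1$, where the trivial root $t=1$ coincides with the left end of the target interval and with the discontinuity of $h$ — here one just notes $h(1^+)=1/q<1=m$, so the non-trivial root still lies strictly to the right of $m$. I expect (i) — making the "the earlier analysis applies verbatim" claim fully rigorous — to be the main obstacle, as it is the one step that is not a direct computation. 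Observe, incidentally, that the injectivity of $f$ is never used in establishing \eqref{eq: lemma 2}, since the equation depends on $x$ only through the number $m=f(x)$; it is presumably recorded for use later.
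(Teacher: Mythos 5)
Your proof is correct, but it takes a different route from the paper's. The paper's own argument is a short intermediate-value computation on the original form: it sets $h_{f(x)}(t):=t^q\left(f(x)+1-t\right)$, evaluates at the two endpoints to get $h_{f(x)}\left(f(x)\right)=f(x)^q>f(x)$ and $h_{f(x)}\left(f(x)+1\right)=0<f(x)$, and concludes by continuity (plus the earlier injectivity discussion for uniqueness) that the root lies in $\left(f(x),f(x)+1\right)$. You instead invert the relation to $m=h(t)$ with $h(t)=t^q(t-1)/(t^q-1)$, i.e.\ the function $x(t)$ of \eqref{eq: less abig recast l_n solution to}, reuse the monotonicity and limit analysis behind \eqref{eq: x'(t)}, and then obtain the two bounds from the elementary comparisons $h(t)<t$ on $(1,\infty)$ and $h(m+1)>m$. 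What the paper's route buys is brevity and independence from the Section 2.2 machinery (no need to argue that the derivative analysis "carries over" to general $q>1$, which you rightly flag as the delicate step of your approach); what your route buys is a cleaner identification of which root is meant and, notably, a correct treatment of the boundary case $f(x)=1$, where the paper's strict inequality $f(x)^q>f(x)$ actually degenerates to equality and its endpoint argument needs the same small repair you supply via $h(1^+)=1/q<1$. Your observation that the injectivity hypothesis on $f$ is never used is also accurate: the equation depends on $x$ only through $m=f(x)$.
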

\begin{proof}
    This is essentially a generalization of what we have already seen about $t(x)$ and $t_n$. As with those special cases discussed earlier, on the domain, $x\in(0,1/q)\cup(1/q,\infty), \;t\in(0,1)\cup(1,\infty)$, we have injectivity between $x, f(x)$, and $ t_{f(x)}$. Define 
    \begin{equation}
        h_{f(x)}(t):=t^q(f(x)+1-t).
    \end{equation}
    \par
    Since $f(x)\ge1$, 
    \begin{equation}
        h_{f(x)}(f(x))=f(x)^q(f(x)+1-f(x))=f(x)^q>f(x)
    \end{equation}
    and 
    \begin{equation}
        h_{f(x)}(f(x)+1)=(f(x)+1)^q(f(x)+1-(f(x)+1))=0<f(x).
    \end{equation}
    Then, by continuity with respect to $t$ and the injectivity, there is a unique solution $t\in(f(x), f(x)+1)$, which is equivalent to \eqref{eq: lemma 2}.
    
\end{proof}

\begin{lemma} \label{lem: 3}
    Let $t_{f(x)}$, $q$, $x$, and $f(x)$ have the same properties as in Lemma \ref{lem: 2}. Let $\ell(x)$ and $g(x)$ be functions with $l(f(x))>0$ for the $x$ under consideration. 
    \par
    Assume that $g$ is strictly increasing on $x\in(0,\infty)$ \emph{(equivalently, it suffices that $g$ is strictly increasing on the interval}
    \[
        \left[\ell(f(x))+1-\ell\left(f(x)\right)^{1-q},\ell(f(x))+1\right].
    \]
    If, in addition, 
    \begin{equation}\label{eq: endpoints}
        g\left(\ell(f(x))+1-\ell\left(f(x)\right)^{1-q}\right)>f(x)
        \quad\text{and}\quad
        g\left(\ell(f(x))+1\right)<f(x)+1
    \end{equation}
    hold, then
    \begin{equation}
        f(x)<g\left(t_{\ell(f(x))}\right)<f(x)+1.
    \end{equation}
\end{lemma}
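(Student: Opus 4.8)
The plan is to derive the conclusion from Lemma~\ref{lem: 2} together with one short sharpening step and the monotonicity of $g$. Throughout, write $m:=\ell(f(x))$ for the value under consideration, and let $t:=t_{\ell(f(x))}$ be the corresponding root of $t^q(m+1-t)=m$, selected exactly as in Lemma~\ref{lem: 2} applied with the composite $\ell\circ f$ playing the role of $f$. This first invocation of Lemma~\ref{lem: 2} requires $m\ge 1$ (which holds in every instance where Lemma~\ref{lem: 3} is later used, and is the natural reading of the standing hypothesis on $\ell(f(x))$), and it yields the crude bracket $m<t<m+1$.

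Next I would sharpen the lower end of this bracket using the defining equation directly. From $t^q(m+1-t)=m$ we get $m+1-t=m/t^q$; since $t>m$ and $q>0$, this gives $m+1-t=m/t^q<m/m^q=m^{1-q}$, hence
\[
t\;>\;m+1-m^{1-q}.
\]
Together with $t<m+1$ from the previous step, this shows that $t_{\ell(f(x))}$ lies strictly inside
\[
\bigl[\,\ell(f(x))+1-\ell(f(x))^{1-q},\ \ell(f(x))+1\,\bigr],
\]
which is precisely the interval on which $g$ is assumed strictly increasing (a fortiori so if $g$ is strictly increasing on all of $(0,\infty)$). Note this interval is genuinely a sub-interval of $(m,m+1)$ because $q>1$ and $m\ge 1$ force $m^{1-q}\le 1$.

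Finally, applying strict monotonicity of $g$ at the three points $m+1-m^{1-q}<t<m+1$ gives
\[
g\bigl(\ell(f(x))+1-\ell(f(x))^{1-q}\bigr)\;<\;g\bigl(t_{\ell(f(x))}\bigr)\;<\;g\bigl(\ell(f(x))+1\bigr),
\]
and chaining this with the two endpoint hypotheses in \eqref{eq: endpoints} — the left endpoint value exceeds $f(x)$, the right endpoint value is below $f(x)+1$ — sandwiches $g(t_{\ell(f(x))})$ strictly between $f(x)$ and $f(x)+1$, as claimed.

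I expect the only delicate point to be the interval bookkeeping in the middle step: one must confirm that the sharpened interval indeed sits inside $(m,m+1)$, where Lemma~\ref{lem: 2} has already located $t$, and one must keep every inequality strict (using $t>m$, $t<m+1$, and the strict endpoint conditions) so that the final sandwich is strict rather than merely non-strict. Apart from that, the argument is a direct substitution into Lemma~\ref{lem: 2} and into the hypotheses \eqref{eq: endpoints}, with no new estimates required.
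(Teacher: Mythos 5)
Your proposal is correct and follows essentially the same route as the paper's proof: both derive the sharpened bracket $t_{\ell(f(x))}\in\big[\ell(f(x))+1-\ell(f(x))^{1-q},\,\ell(f(x))+1\big)$ directly from the defining equation (the paper via $1-\varepsilon=y/t_y^{q}\le y^{1-q}$, you via $m+1-t=m/t^{q}<m^{1-q}$), then apply the strict monotonicity of $g$ and chain with the endpoint hypotheses \eqref{eq: endpoints}. The only differences are cosmetic: you make the appeal to Lemma \ref{lem: 2} (and the attendant $\ell(f(x))\ge 1$ requirement) explicit, and your lower bound is strict where the paper's is non-strict, neither of which changes the argument.
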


\begin{proof}
    Write $u:=f(x)$ and $y:=l(u)$. Since $q>1$ and $y>0$, the unique solution $t_y\in(y,y+1)$ of $t^{q}\,(y+1-t)=y$ satisfies the \emph{basic bound}
    \begin{equation}\label{eq: basic-bound}
        y+1-y^{1-q}\le t_y < y+1.
    \end{equation}
    This can be seen by considering $t_y=y+\varepsilon$ with $0<\varepsilon<1$. Substituting back in, we have
    $(y+\varepsilon)^{q}(1-\varepsilon)=y$, hence
    \[
        1-\varepsilon=\frac{y}{t_y^{q}}\le \frac{y}{y^{q}}=y^{1-q},
    \]
    which is equivalent to \eqref{eq: basic-bound}.

    Since $g$ is strictly increasing on $x\in(0,\infty)$, it is in particular strictly increasing on 
    $\left[y+1-y^{1-q},\,y+1\right]$. Applying $g$ to \eqref{eq: basic-bound} gives
    \[
        g\left(y+1-y^{1-q}\right) < g(t_y) < g(y+1).
    \]
    Then, as long as the inequalities in \eqref{eq: endpoints} hold, with $y=\ell(u)$, we obtain 
    $u<g\!\big(t_{\,\ell(u)}\big)<u+1$, that is,
    \[
        f(x)<g\left(t_{\ell(f(x))}\right)<f(x)+1.
    \]
\end{proof}

\subsection{Endpoint Inequalities}
Let us make use of some of the established facts. We've established that if
\begin{equation} \label{eq: necessary condition for ceil equality}
    l_n>\floor{\frac{\ln(n+1)}{b}} \; \Rightarrow \; \frac{\ln(t_n)}{b}>\floor{\frac{\ln(n+1)}{b}},
\end{equation}
($b=\ln(3/2)$) then $\ceil{l_n}=\ceil{u_n}$, and furthermore that if the inequality holds for the jump indexes $n=\ell$, then it holds for all $n\ge2$. Using \eqref{eq: jump indexes} and Lemma \ref{lem: 1}, we have that 
\begin{equation} \label{eq: get i from l_i}
    \floor{\frac{\ln(\ell_i+1)}{b}}=i.
\end{equation}
\par
Let $f(x)=\floor{x}=i$ and $\ell(x)=\floor{(3/2)^x}$, so $\ell(f(x))$ is equivalent to \eqref{eq: jump indexes}. Let $g(x)=\ln(x)/b$. From Lemma \ref{lem: 2}, we have 
\begin{equation}
    \ell(f(x))<t_{\ell(f(x))}<\ell(f(x))+1 \;\Longleftrightarrow\; \ell_i<t_{\ell_i}<\ell_i+1.
\end{equation}
Then from Lemma \ref{lem: 3}, we have
\begin{equation} \label{eq: g(x) equivelences}
    f(x)<g(t_{\ell(f(x))})<f(x)+1 \Longleftrightarrow i<\frac{\ln(t_{\ell_i})}{b}<i+1,
\end{equation}
\emph{if the endpoint inequalities \eqref{eq: endpoints} hold here.} Combining \eqref{eq: get i from l_i} and \eqref{eq: g(x) equivelences}, we have 
\begin{equation} \label{eq: jump index inequality}
    i=\floor{\frac{\ln(\ell_i+1)}{b}}<\frac{\ln(t_{\ell_i})}{b},
\end{equation}
which is exactly the condition \eqref{eq: necessary condition for ceil equality}. Thus to show that the condition is true, it suffices to show that the endpoint inequalities \eqref{eq: endpoints} are true for this $f(x), \ell(x)$, and $g(x)$.
\par 
First, we consider 
\begin{equation} \label{eq: upper inequality starting}
    g\left(\ell(f(x))+1\right)<f(x)+1 \; \Longleftrightarrow \; \frac{\ln\left(\floor{(3/2)^i}+1\right)}{b}<i+1.
\end{equation}
Since $x\mapsto (3/2)^x$ and $\ln$ are increasing and $(3/2)^i\notin\mathbb{Z}$ for $i\ge 1$, we have
\begin{equation} \label{eq: upper inequality step 1}
    \frac{\ln\left(\floor{(3/2)^i}+1\right)}{b}
    \;<\;
    \frac{\ln\left((3/2)^i+1\right)}{b}
    \;\le\;
    \frac{\ln\left((3/2)^x+1\right)}{b}.
\end{equation}

Consider the functions
\[
F(x)=\frac{\ln\!\big((3/2)^x+1\big)}{b}
\qquad\text{and}\qquad
G(x)=x+1.
\]
Then
\[
F'(x)=\frac{(3/2)^x}{(3/2)^x+1}\in(0,1)
\quad\text{and}\quad
G'(x)=1.
\]
Hence $H(x):=G(x)-F(x)$ satisfies
\begin{equation}
    H'(x)=1-\frac{(3/2)^x}{(3/2)^x+1}=\frac{1}{(3/2)^x+1}>0,
\end{equation}
so $H$ is strictly increasing on $(0,\infty)$. $F(x)$ and $G(x)$ have a single equality at $x=-\frac{\ln\!\big((3/2)-1\big)}{b}\approx 1.7095$
Therefore $H(x)>0$ for all $x\ge 2$, that is,
\begin{equation} \label{eq: upper inequality step 2}
    \frac{\ln((3/2)^x+1)}{b}<x+1
    \qquad (x\ge 2).
\end{equation}

Applying \eqref{eq: upper inequality step 2} with $x\rightarrow i=\floor{x}$ and combining with \eqref{eq: upper inequality step 1} yields
\begin{equation}
\frac{\ln\left(\floor{(3/2)^{i}}+1\right)}{b} < \frac{\ln\left((3/2)^{i}+1\right)}{b} < i+1,
\end{equation}
thus \eqref{eq: upper inequality starting} holds.
\par
The other endpoint inequality, 
\begin{equation} \label{eq: lower endpoint start}
    g\left(\ell(f(x))+1-\ell\left(f(x)\right)^{1-q}\right)>f(x) 
    \;\Longleftrightarrow\;
    \frac{\ln\left(\floor{(3/2)^i}+1-\floor{(3/2)^i}^{1-q}\right)}{b}>i
\end{equation}
is considerably trickier and deserves a dedicated lemma.  
\par
\begin{lemma}\label{lem: lower-endpoint}
Let $b:=\ln(3/2)$ and $q:=\ln 2\,/\,b\in(1,2)$. For $x\ge 2$ set $i:=\floor{x}$ and $\ell_i:=\floor{(3/2)^i}$. Then
\begin{equation}\label{eq: lower-endpoint-claim}
    \frac{\ln\!\big(\ell_i+1-\ell_i^{\,1-q}\big)}{b}\;>\;i,
\end{equation}
equivalently,
\begin{equation}\label{eq: lower-endpoint-claim-mult}
    \ell_i+1-\ell_i^{\,1-q}\;>\;\Big(\frac{3}{2}\Big)^{\!i}.
\end{equation}
\end{lemma}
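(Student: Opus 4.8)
The plan is to turn \eqref{eq: lower-endpoint-claim-mult} into a one-sided Diophantine statement about the sequence $(3/2)^i \bmod 1$, and then to control the relevant distance. Since $(3/2)^i \notin \mathbb{Z}$ for $i \ge 1$, write $(3/2)^i = \ell_i + \theta_i$ with $\theta_i := \fractional{(3/2)^i} \in (0,1)$. Subtracting $\ell_i$ from both sides, \eqref{eq: lower-endpoint-claim-mult} is equivalent to
\begin{equation}
    1 - \theta_i \;>\; \ell_i^{\,1-q},
\end{equation}
i.e.\ the distance from $(3/2)^i$ up to the next integer $\ell_i + 1 = \ceil{(3/2)^i}$ must exceed $\ell_i^{\,1-q}$. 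So everything hinges on a lower bound for $1 - \fractional{(3/2)^i}$.

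Next I would make the right-hand side explicit. From $q = \ln 2 / \ln(3/2)$ one gets $(3/2)^q = 2$, hence $(3/2)^{1-q} = 3/4$ and $\big((3/2)^i\big)^{1-q} = (3/4)^i$. Since $1 - q \in (-1,0)$ the map $t \mapsto t^{1-q}$ is strictly decreasing, so the bounds $(3/2)^i - 1 < \ell_i < (3/2)^i$ yield the sandwich
\begin{equation}
    (3/4)^i \;<\; \ell_i^{\,1-q} \;<\; \big((3/2)^i - 1\big)^{1-q} \;=\; \frac{(3/4)^i}{\big(1 - (3/2)^{-i}\big)^{\,q-1}}.
\end{equation}
For $i \ge 2$ the correction factor $\big(1 - (3/2)^{-i}\big)^{-(q-1)}$ is at most an explicit constant decreasing to $1$, so it would suffice to prove $1 - \theta_i > C_i (3/4)^i$ for some $C_i$ decreasing to $1$. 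It is worth noting that the left inequality here, $\ell_i^{1-q} > (3/4)^i$, already shows the lemma to be at least as strong as the classical sufficient condition $\fractional{(3/2)^i} < 1 - (3/4)^i$ for the "carry" \eqref{eq:carry} to fail.

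The decisive step is thus to bound $1 - \fractional{(3/2)^i}$ below by essentially $(3/4)^i$, and this is where I expect the genuine obstacle to lie. I would partition the range of $i$: for $2 \le i \le N_0$, verify \eqref{eq: lower-endpoint-claim-mult} directly, since $(3/2)^i$ is an exact rational, $\ell_i$ is then immediate, and $\ell_i^{\,1-q}$ can be bounded rigorously by interval arithmetic on its single irrational exponent — and large-scale verifications of the companion inequality already in the literature fix a workable $N_0$. For $i > N_0$ I would invoke an effective lower bound of the form $\|(3/2)^i\| > c^{\,i}$ with an explicit $c \in (0,1)$, from the Pad\'e/hypergeometric treatment of fractional parts of powers of rationals, and combine it with the sandwich above. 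The hard part is quantitative: the exponents $c$ presently available are smaller than $3/4$, so as things stand the tail $i > N_0$ is not covered, and any complete proof must supply either a sharpening of the Diophantine input tailored to the sequence $(3/2)^i$ or an elementary self-contained substitute. I would therefore focus effort on whether the arithmetic of the base-$2$/base-$3$ digit interaction in $(3/2)^i$ can be exploited to break past the $c = 3/4$ barrier; that is the step on which the lemma stands or falls.
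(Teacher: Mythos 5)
Your reduction is correct: subtracting $\ell_i$ turns \eqref{eq: lower-endpoint-claim-mult} into $1-\fractional{(3/2)^i}>\ell_i^{\,1-q}$, and since $(3/2)^q=2$ gives $(3/2)^{1-q}=3/4$, your sandwich $(3/4)^i<\ell_i^{\,1-q}<(3/4)^i\bigl(1-(3/2)^{-i}\bigr)^{-(q-1)}$ is right, so the lemma is, up to a factor tending to $1$, exactly the classical condition $\fractional{(3/2)^i}<1-(3/4)^i$. But the obstacle you name is precisely where the proposal stops being a proof: Mahler's theorem yields that condition only for all sufficiently large $i$ and is ineffective, the best effective lower bounds on $\bigl\|(3/2)^i\bigr\|$ coming from the Pad\'e/hypergeometric method (Beukers, Dubitskas, Zudilin) have exponents near $0.58$, well short of $3/4$, and no finite verification up to an $N_0$ can meet an ineffective tail. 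As written, the tail $i>N_0$ is simply not covered, and "break past the $3/4$ barrier" is not a technical loose end but an open problem; so the proposal establishes nothing beyond the range one is willing to check by computation.

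Comparing with the paper's own argument: the paper proceeds differently, via monotonicity of $h_m(t)=t^q(m+1-t)$ on $[m,m+1]$, the upper bound $h_m(m+1-m^{1-q})<m+q$, and the identity $h_m\bigl((3/2)^i\bigr)=2^i-R_i$ with $R_i=2^i\fractional{(3/2)^i}$; the comparison then rests on the claim \eqref{eq: mR-bound}, namely $m+R_i\le 2^i-2$. That claim is literally $2^i\fractional{(3/2)^i}+\floor{(3/2)^i}\le 2^i-2$, i.e.\ a strengthening of the negation of \eqref{eq:carry} --- the very statement the paper sets out to prove --- and the justification given only excludes the two values $m+R_i=2^i$ and $m+R_i=2^i-1$, never the possibility $m+R_i>2^i$. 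So the paper's proof is circular at exactly the point where your attempt honestly stalls. Your diagnosis --- that the lemma stands or falls on a lower bound for $1-\fractional{(3/2)^i}$ of essentially $(3/4)^i$, which current Diophantine technology does not supply --- is the accurate assessment of the situation, and it applies to the paper's argument as well.
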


\begin{proof}
Let $m:=\ell_i=\floor{(3/2)^i}$ and $r:=\{(3/2)^i\}\in(0,1)$ so that $(3/2)^i=m+r$. Let $R_i:=3^i-2^i m=2^i r\in\{1,\dots,2^i-1\}$. Since $\ln$ is strictly increasing and $b>0$, \eqref{eq: lower-endpoint-claim} is equivalent to \eqref{eq: lower-endpoint-claim-mult}.

Introduce the function used earlier in Lemma \ref{lem: 2},
\[
h_m(t):=t^{q}\,(m+1-t),\qquad t\in(m,m+1).
\]
Then
\[
h_m'(t)=t^{q-1}\big(q(m+1-t)-t\big).
\]
For $t\in[m,m+1]$ we have $q(m+1-t)\le q<m\le t$ (since $q<2$ and $m\ge 2$ for $i\ge 2$), hence $h_m'(t)<0$ on $[m,m+1]$ and $h_m$ is strictly decreasing there.

Because $h_m$ is strictly decreasing, \eqref{eq: lower-endpoint-claim-mult} is \emph{equivalent} to
\begin{equation}\label{eq: hm-comparison}
    h_m\!\Big(\Big(\tfrac{3}{2}\Big)^i\Big)\;>\;h_m\!\big(m+1-m^{\,1-q}\big).
\end{equation}
We now bound the two sides of \eqref{eq: hm-comparison} in opposite directions.

\medskip
\noindent\emph{Upper bound for $h_m(m+1-m^{\,1-q})$.}
Let $\delta:=m^{\,1-q}\in(0,1)$. For $q\in[1,2]$ and $a,b\ge 0$ one has
\begin{equation}\label{eq: binomial-q}
(a+b)^q\le a^q+q\,a^{q-1}b+b^q.
\end{equation}
Which can be seen via the following. By homogeneity, set $t=b/a$ and define
$h_q(t):=1+q t+t^q-(1+t)^q$, so $h_q(0)=0$ and
$h_q'(t)=q\big(1+t^{q-1}-(1+t)^{q-1}\big)\ge 0$ for $q\in[1,2]$, hence $h_q(t)\ge 0$ for $t\ge 0$. Applying \eqref{eq: binomial-q} with $a=m$ and $b=1-\delta$, then multiplying by $\delta$ and using $\delta=m^{1-q}$, $m^{q-1}\delta^2=\delta$, and $(1-\delta)^q\le 1$, yields
\begin{equation}\label{eq: rhs-upper}
h_m(m+1-\delta)
=(m+1-\delta)^q\,\delta
\le m+q+\delta\big((1-\delta)^q-q\big)\;<\;m+q.
\end{equation}

\medskip
\noindent\emph{Lower bound for $h_m\big((3/2)^i\big)$.}
Using $(3/2)^{iq}=2^i$ and $m+1-(3/2)^i=1-r$, we have
\begin{equation}\label{eq: lhs-exact}
h_m\!\Big(\Big(\tfrac{3}{2}\Big)^i\Big)=2^i(1-r)=2^i-R_i.
\end{equation}
We claim that for $i\ge 3$,
\begin{equation}\label{eq: mR-bound}
m+R_i\le 2^i-2.
\end{equation}
Indeed, $m+R_i\neq 2^i$ since $R_i\in\{1,\dots,2^i-1\}$. If $m+R_i=2^i-1$ then from $3^i=2^i m+R_i$ we get
\[
3^i=(2^i-1)(m+1).
\]
If $i$ is even, unique factorization forces $2^i-1=3^s$, which is false for even $i\ge 4$. If $i$ is odd, then $2^i-1\equiv 1\pmod 3$, so $3^i\mid (m+1)$ and hence $m+1\ge 3^i$, contradicting $m+1=\lceil(3/2)^i\rceil<3^i$. Thus \eqref{eq: mR-bound} holds, and with \eqref{eq: lhs-exact} we obtain
\begin{equation}\label{eq: lhs-lower}
h_m\!\Big(\Big(\tfrac{3}{2}\Big)^i\Big)=2^i-R_i\;\ge\;m+2 \qquad (i\ge 3).
\end{equation}

\medskip
\noindent\emph{Comparison for $i\ge 3$.}
Since $q<2$, we have $m+q<m+2$. Combining \eqref{eq: rhs-upper} and \eqref{eq: lhs-lower} yields, for all $i\ge 3$,
\[
h_m\!\Big(\Big(\tfrac{3}{2}\Big)^i\Big)\;\ge\;m+2\;>\;m+q\;>\;h_m\!\big(m+1-m^{\,1-q}\big),
\]
which is exactly \eqref{eq: hm-comparison}.

\medskip
\noindent\emph{Base cases.}
For $i=3$, $m=\floor{(3/2)^3}=3$ and $R_3=3$, hence by \eqref{eq: lhs-exact}
\[
h_m\!\Big(\Big(\tfrac{3}{2}\Big)^3\Big)=8-3=5.
\]
On the other hand, \eqref{eq: rhs-upper} gives $h_m(m+1-m^{1-q})<m+q=3+q<5$ since $q<2$. Thus \eqref{eq: hm-comparison} holds.

For $i=2$, the inequality \eqref{eq: lower-endpoint-claim-mult} reads
\[
2+1-2^{\,1-q}>\Big(\tfrac{3}{2}\Big)^2
\quad\Longleftrightarrow\quad
\frac{3}{4}>2^{\,1-q}.
\]
Taking logarithms and noting that $\ln(3/4)<0$ gives
\[
\ln\!\Big(\tfrac{3}{4}\Big)>(1-q)\ln 2
\quad\Longleftrightarrow\quad
q>1+\frac{\ln(4/3)}{\ln 2}\approx 1.415\ldots
\]
which is satisfied by our fixed $q=\ln 2/\ln(3/2)\approx 1.7095$. Hence \eqref{eq: hm-comparison} also holds for $i=2$.

\medskip
In all cases $i\ge 2$, we have established \eqref{eq: hm-comparison}. By strict decrease of $h_m$ on $[m,m+1]$, this is equivalent to \eqref{eq: lower-endpoint-claim-mult}, which is equivalent to \eqref{eq: lower-endpoint-claim}.
\end{proof}
\par
Thus given our $f(x), \ell(x)$, and $g(x)$, via Lemma \ref{lem: lower-endpoint}, \eqref{eq: lower endpoint start} also holds. 

\subsection{End Result}
Since both the endpoint inequalities, \eqref{eq: upper inequality starting} and \eqref{eq: lower endpoint start}, hold for $i\ge2$, the inequality for the jump indexes, \eqref{eq: jump index inequality}, then holds. As discussed at the end of Section \ref{subsect: relat. bt. bounds}, since the inequality holds for the jump indexes, it also holds for all $n\ge 2$, that is
\begin{equation}
    \frac{\ln(t_n)}{b}>\floor{\frac{\ln(n+1)}{b}}.
\end{equation}
As discussed at the start of Section \ref{subsect: relat. bt. bounds}, this is a sufficient condition for the following to be true,
\begin{equation}
\ceil{\frac{\ln(t_n)}{b}}=\ceil{\frac{\ln(n+1)}{b}}
\;\Longleftrightarrow\;
\ceil{l_n}=\ceil{u_n}.
\end{equation}
\par
Since $u_n$ is never an integer for $n\ge2$, $\ceil{u_n}\ne\floor{u_n}$. And so we have that $\ceil{l_n}\ne\floor{u_n}$. Thus, for $n\ge2$, \emph{$l_n$ and $u_n$ never fall on opposite sides of an integer value of $x$}, and there are no such $s_n$ segments that have a midpoint at an integer $x$. Finally then, via the work of Section \ref{subsect: ground work}, no such $k$ exists that can make \eqref{eq:carry} true, and so the expression for $g(k)$ is given precisely by \eqref{eq:conj}.

\begin{figure}
    \centering
    \includegraphics[width=0.6\linewidth]{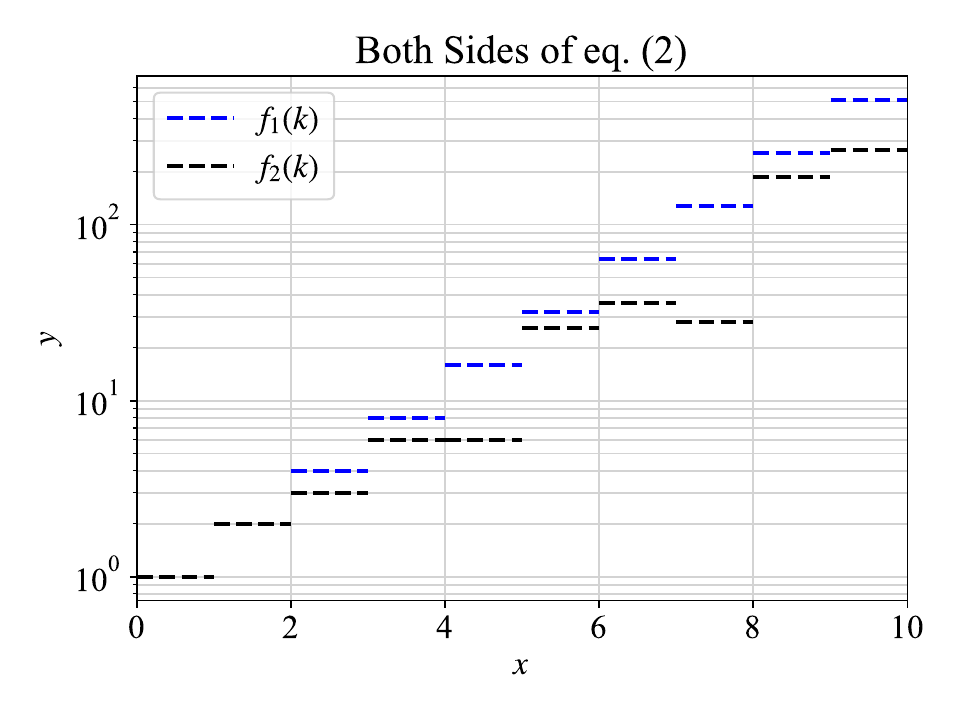}
    \caption{$f_1(k)$ and $f_2(k)$. For graphical clarity, $k$ has been treated as $\floor{x}$. Note that $f_1=f_2$ at $k=0$ and $k=1$.}
    \label{fig: steps}
\end{figure}

\begin{figure}
    \centering
    \includegraphics[width=0.6\linewidth]{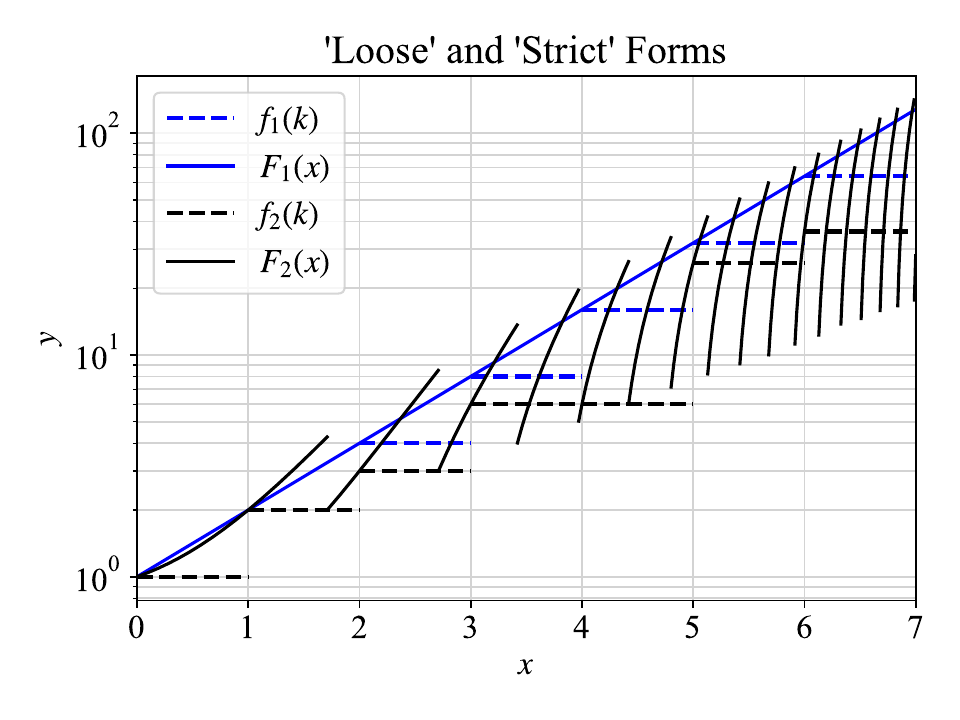}
    \caption{The "strict" expressions as dashed lines and the "loose" expressions as solid ones. $f_1(k)$ has it's "steps" starting at every location where $F_1(x)$ crosses an integer value of $x$, which similarly is true between $f_2(k)$ and $F_2(x)$.}
    \label{fig: loose and strict forms}
\end{figure}

\begin{figure}
    \centering
    \includegraphics[width=0.6\linewidth]{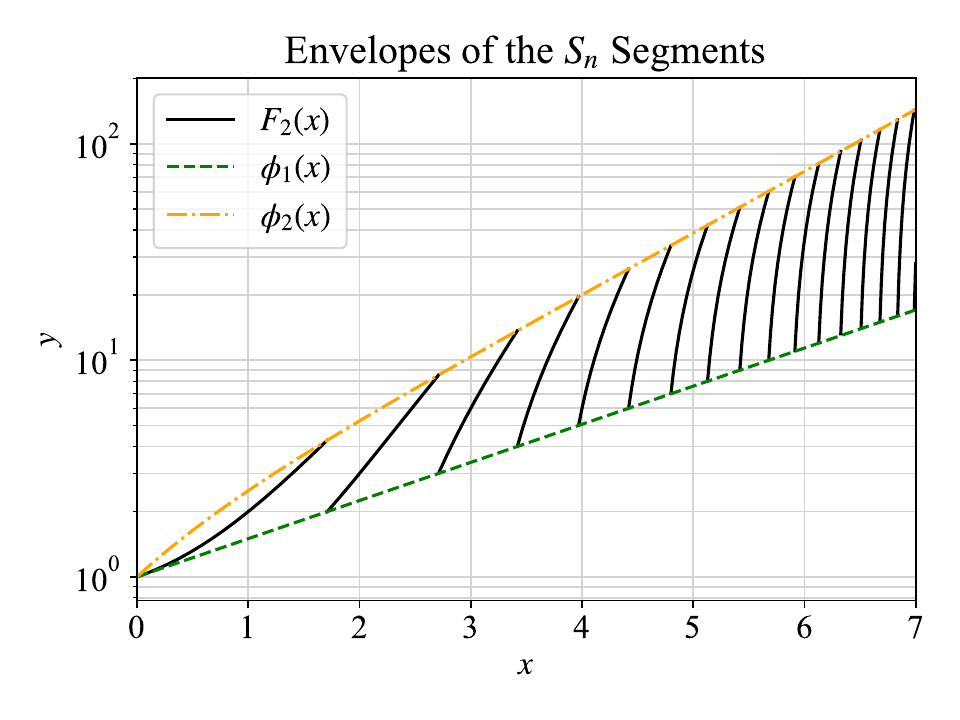}
    \caption{The upper and lower envelopes, $\phi_2(x)$ and $\phi_1(x)$, respectively, of $F_2(x)$ and hence the envelopes of the $S_n$ segments.}
    \label{fig: envelopes of S_n}
\end{figure}

\begin{figure}
    \centering
    \includegraphics[width=0.6\linewidth]{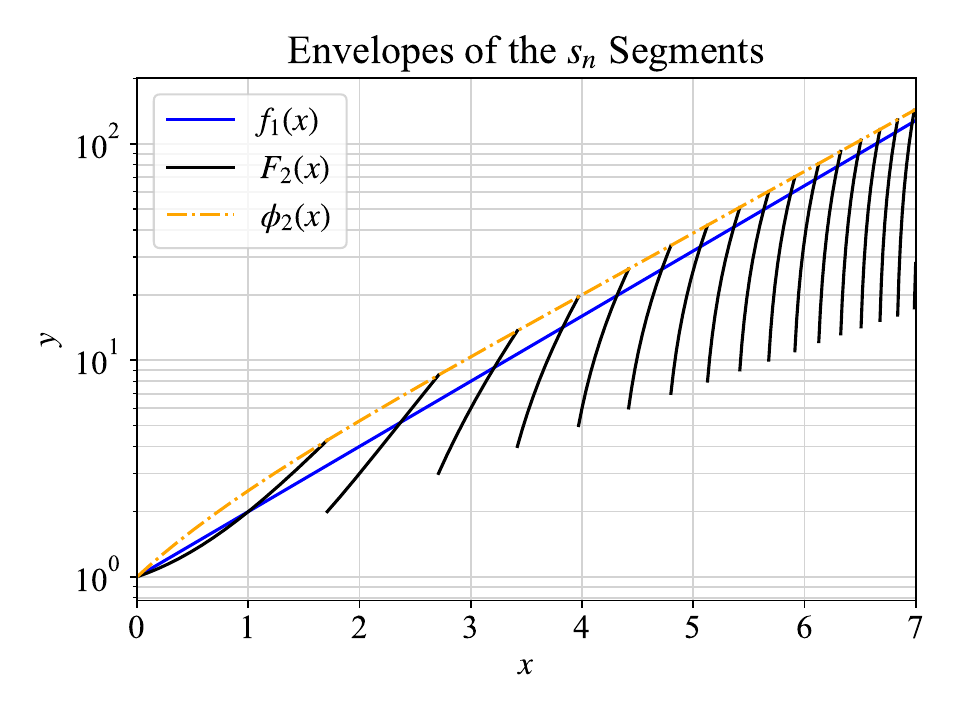}
    \caption{The upper and lower envelopes, $\phi_2(x)$ and $f_1(x)$, respectively, of the $s_n$ segments.}
    \label{fig: envelopes of s_n}
\end{figure}

\begin{figure}
    \centering
    \includegraphics[width=0.6\linewidth]{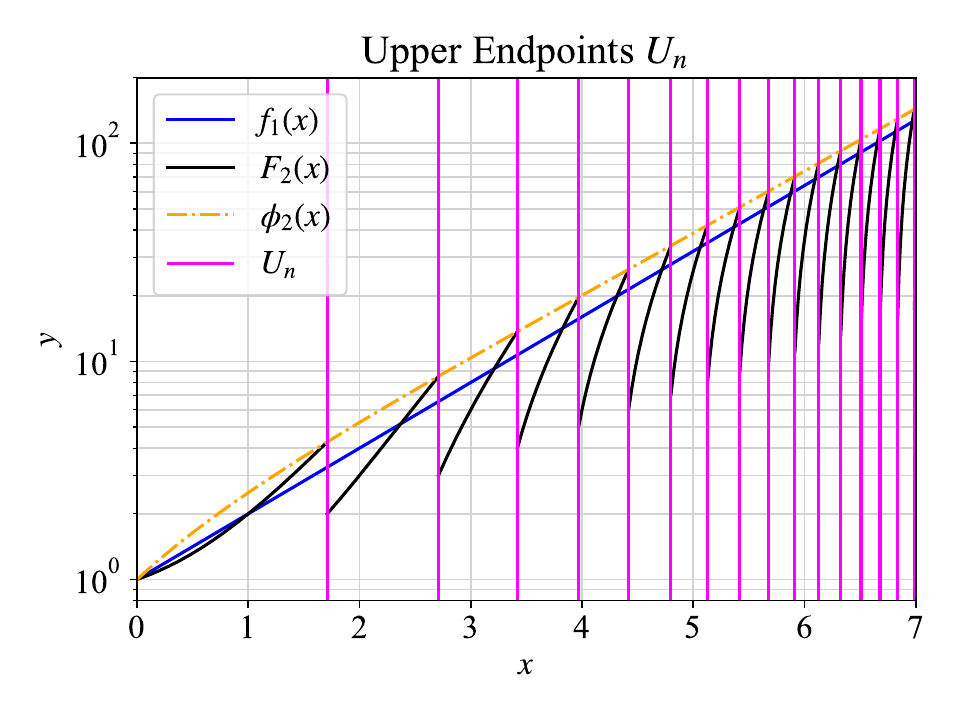}
    \caption{The limits of the upper boundary of the segments, $U_n=u_n$, which are equal to the lower bound of the next segment $L_{n+1}$, as given by $\ln{(n)}/\ln{(3/2)}$.}
    \label{fig:5}
\end{figure}

\begin{figure}
    \centering
    \includegraphics[width=0.6\linewidth]{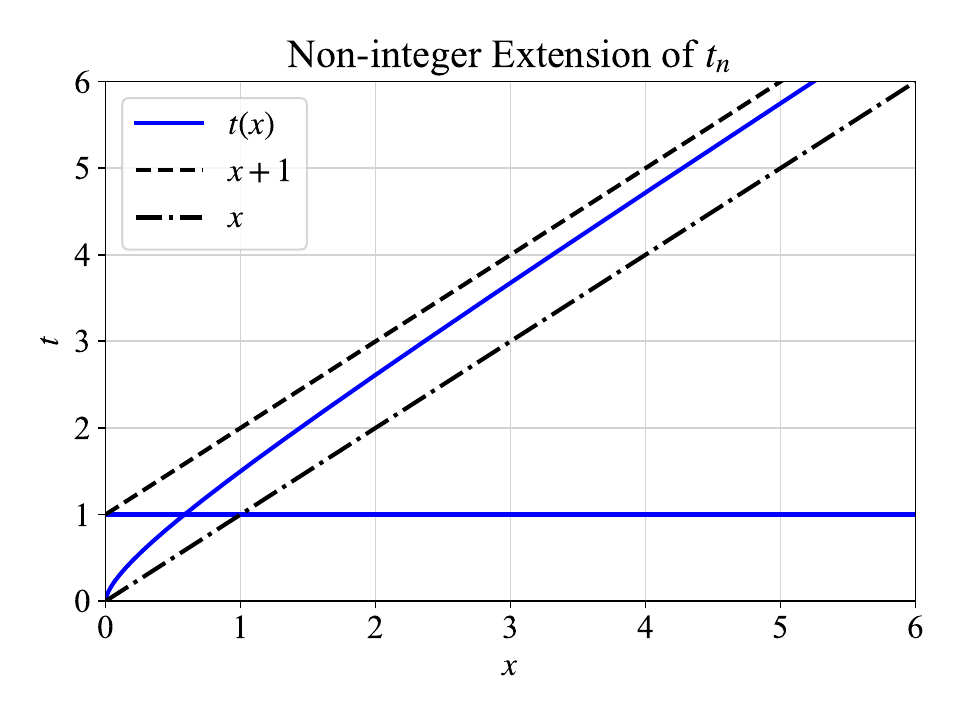}
    \caption{The non-integer extension of of $t_n$, $t^q(x+1-t)=x$. The trivial solutions form the horizontal line at $t=1$, while the non-trivial solutions are bounded by $x$ from below and $x+1$ from above.}
    \label{fig:6}
\end{figure}

\begin{figure}
    \centering
    \includegraphics[width=0.6\linewidth]{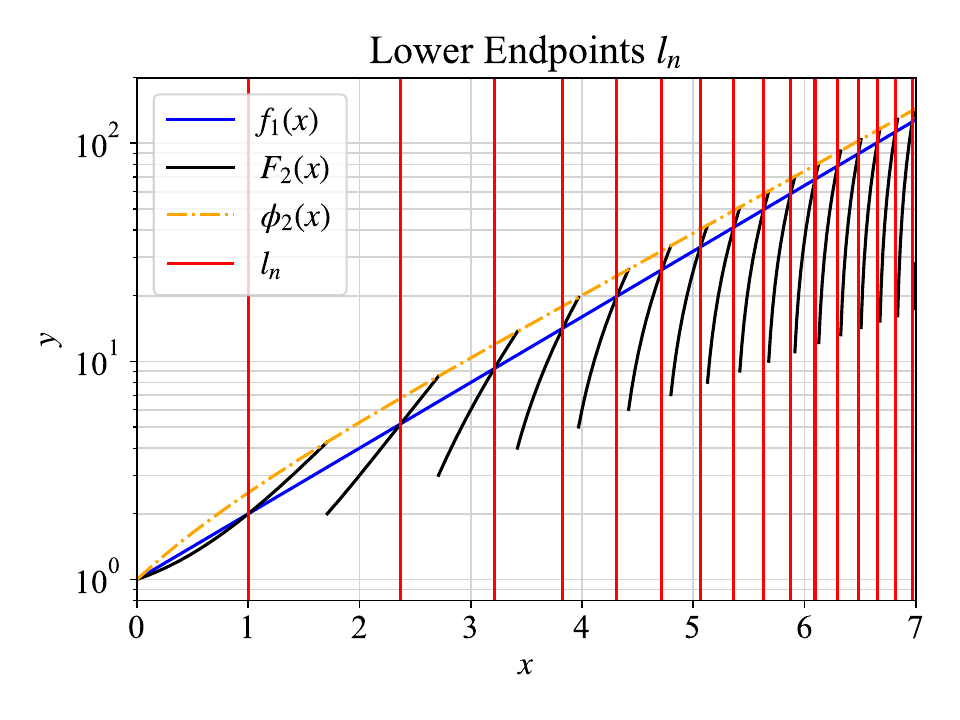}
    \caption{The lower bounds of the $s_n$ segments, $l_n$. }
    \label{fig:7}
\end{figure}

\newpage

\printbibliography

@book{Waring,
  author    = {Waring, Edward},
  title     = {Meditationes algebraic{\ae}},
  year      = {1782},
  edition   = {3},
  note      = {Editio tertia recensita et aucta},
  publisher = {Typis Academicis excudebat J. Archdeacon; veneunt apud J. Nicholson, Cantabrigiae; J. C. \& F. Rivington, S. Crowder, H. Gardner, \& S. Hayes, Londini; \& J. Fletcher, Oxonii}
}

@article{Hilbert,
  author    = {Hilbert, David},
  title     = {Beweis f{\"u}r die Darstellbarkeit der ganzen Zahlen durch eine feste Anzahl n-ter Potenzen (Waringsches Problem)},
  journal   = {Mathematische Annalen},
  year      = {1909},
  volume    = {67},
  number    = {3},
  pages     = {281--300},
  doi       = {10.1007/bf01450405},
  mrnumber  = {1511530},
  s2cid     = {179177986},
  language  = {German}
}

@article{Dickson1936,
  author    = {Dickson, L. E.},
  title     = {Solution of Waring’s Problem},
  journal   = {American Journal of Mathematics},
  volume    = {58},
  number    = {3},
  pages     = {530--535},
  year      = {1936},
  doi       = {10.2307/2370970},
  publisher = {Johns Hopkins University Press},
  url       = {https://doi.org/10.2307/2370970}
}

@article{Rubugunday1942,
  author    = {Rubugunday, R. K.},
  title     = {On g(k) in Waring's Problem},
  journal   = {Journal of the Indian Mathematical Society},
  volume    = {6},
  pages     = {192--198},
  year      = {1942}
}

@article{Niven1944,
  author    = {Niven, Ivan M.},
  title     = {An unsolved case of the Waring problem},
  journal   = {American Journal of Mathematics},
  volume    = {66},
  number    = {1},
  pages     = {137--143},
  year      = {1944},
  doi       = {10.2307/2371901},
  publisher = {Johns Hopkins University Press},
  jstor     = {2371901},
  mrnumber  = {0009386}
}

@article{Pillai1936,
  author    = {Pillai, S. S.},
  title     = {On Waring's Problem},
  journal   = {Journal of the Indian Mathematical Society},
  volume    = {2},
  pages     = {16--44},
  year      = {1936}
}

@article{Mahler1957,
  author    = {Mahler, Kurt},
  title     = {On the fractional parts of the powers of a rational number II},
  journal   = {Mathematika},
  volume    = {4},
  number    = {2},
  pages     = {122--124},
  year      = {1957},
  doi       = {10.1112/s0025579300001170},
  mrnumber  = {0093509}
}

@article{KubinaWunderlich1990,
  author    = {Kubina, Jeffrey M. and Wunderlich, Marvin C.},
  title     = {Extending Waring's conjecture to 471,600,000},
  journal   = {Mathematics of Computation},
  volume    = {55},
  number    = {192},
  pages     = {815--820},
  year      = {1990},
  doi       = {10.2307/2008448},
  jstor     = {2008448},
  mrnumber  = {1035936},
  bibcode   = {1990MaCom..55..815K}
}

\end{document}